\def\bl{\begin{lemma}}
\def\el{\end{lemma}}
\def\bth{\begin{theorem}}
\def\eth{\end{theorem}}
\def\bc{\begin{corollary}}
\def\ec{\end{corollary}}
\def\bcj{\begin{conjecture}}
\def\ecj{\end{conjecture}}
\def\bpr{\begin{proposition}}
\def\epr{\end{proposition}}
\def\bde{\begin{definition}}
\def\ede{\end{definition}}
\def\E{\mathbb{E}}
\def\H{\mathbb{H}}
\def\Pr{\mathbb{P}}
\newcommand{\dist}{\mbox{\rm dist}}
\newcommand{\be}{\begin{eqnarray}}
\newcommand{\ee}{\end{eqnarray}}
\newcommand{\eps}{{\mbox{$\epsilon$}}}
\newcommand{\R}{{\mathbb R}}
\newcommand{\Z}{{\mathbb Z}}
\newcommand{\C}{{\mathbb C}}
\renewcommand{\and}{\hbox{ {\rm and} }}
\newcommand{\Pp}{\mathcal P}
\newcommand{\mathcalG}{{\mathcal G}_*}
\newcommand{\mathcalGG}{{\mathcal G}_{**}}
\newcommand{\Isom}{{\rm Isom}}
\def\calD{{\mathcal D}}
\newcommand{\calV}{{\mathcal V}}
\newcommand{\G}{{\tt G}}
\newtheorem{theorem}{Theorem}[section]
\newtheorem{definition}{Definition}[section]
\newtheorem{lemma}[theorem]{Lemma}
\newtheorem{corollary}[theorem]{Corollary}
\newtheorem{proposition}[theorem]{Proposition}
\newtheorem{conjecture}[theorem]{Conjecture}
\newtheorem{quest}[theorem]{Question}
\theoremstyle{definition}
\newtheorem{example}[theorem]{Example}
\newtheorem{remark}[theorem]{Remark}
\numberwithin{equation}{section}
\title{Invariant embeddings of unimodular random planar graphs}
\author{Itai Benjamini and \'Ad\'am  Tim\'ar }
\begin{document}

\maketitle
\let\thefootnote\relax\footnotetext{\footnotesize{
\it{2010 Mathematics Subject Classification.} {\rm Primary 60D05. Secondary 60K99.}
}}
\begin{abstract}
Consider an ergodic unimodular random one-ended planar graph $\G$ of finite expected degree. We 
prove that it has an isometry-invariant locally finite embedding in the Euclidean plane if and only if it is invariantly amenable. By ``locally finite'' we mean that any bounded open set intersects finitely many embedded edges. In particular, there exist invariant embeddings in the Euclidean plane for the Uniform Infinite Planar Triangulation and for the critical Augmented Galton-Watson Tree conditioned to survive. Roughly speaking, a unimodular embedding of $\G$ is one that is jointly unimodular with $\G$ when viewed as a decoration. We show that $\G$ has a unimodular embedding 
in the hyperbolic plane if it is invariantly nonamenable, and it has a unimodular embedding 
in the Euclidean plane if and only if it is invariantly amenable. Similar claims hold for representations by tilings instead of embeddings. 
\end{abstract}

\section{Introduction}\label{intro}

\subsection{Main results}

Homogeneous (say, vertex-transitive) tilings of the Euclidean and hyperbolic planes are well-understood classical objects. Here we study {\it random} planar tilings that are ``homogeneous'' in some sense: they have an isometry-invariant law or satisfy a certain stationarity property, called unimodularity (see the definitions below). We are asking the question: when does a random infinite graph that is known to be almost surely planar have such a ``homogeneous'' embedding into the Euclidean or hyperbolic plane without accumulation points (i.e., a  {\it locally finite} embedding)? 

After formalizing the above question, one finds right away that a necessary condition is that the random graph with a properly chosen root is unimodular itself, so from now on, we are only interested in this family of random rooted graphs. Unimodularity, embedding and representation by a tiling will be defined later in the introduction. We say that the random rooted graph $(\G,o)$ has {\it finite expected degree} if the expected degree of $o$ is finite.

\begin{theorem}\label{uj}
An ergodic unimodular random one-ended planar graph $\G$ of finite expected degree has an isometry-invariant locally finite embedding 
into the Euclidean plane if and only if $\G$ is invariantly amenable. The same condition is necessary and sufficient to represent $\G$ by an isometry-invariant locally finite tiling.
\end{theorem}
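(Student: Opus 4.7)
The plan is to handle the two directions separately: a Euclidean volume-growth argument combined with the mass transport principle handles necessity, while circle packing (in the style of Benjamini--Schramm and Angel--Hutchcroft--Nachmias--Ray) serves as the engine for sufficiency. I treat only the embedding statement; the tiling variant will follow from the same construction by passing to Voronoi cells of the packed discs.

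For the necessity direction, suppose $\phi\colon G \hookrightarrow \R^2$ is an isometry-invariant embedding with no accumulation point. I would prove invariant amenability by producing Folner sets as traces of Euclidean discs. Concretely, for $R>0$ let $F_R$ be the set of vertices whose $\phi$-image lies within Euclidean distance $R$ of $\phi(o)$. The no-accumulation-point hypothesis guarantees $|F_R|$ is a.s.\ finite, and the mass transport principle applied to the isometry-invariant decoration $\phi$ allows one to compare $\E|\partial F_R|$ with $\E|F_R|$. Because the embedded edges are rectifiable curves whose total length inside a disc of radius $R$ grows at most quadratically, while boundary edges are those crossing an annulus of width $O(1)$ about the circle of radius $R$, one obtains $\E|\partial F_R|/\E|F_R|\to 0$ along a subsequence of radii (after an averaging-over-$R$ step to wash out any exceptional radii). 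This is exactly the invariant amenability condition.

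For sufficiency, the plan is to realize the embedding via circle packing. First I would reduce to the triangulation case by adding, in a deterministic unimodular way, a vertex in each face of $G$ joined to all vertices of that face; this preserves one-endedness, planarity, finite expected degree, and invariant amenability (the added vertices contribute a bounded blow-up at the level of Folner sets). The resulting unimodular random triangulation $T$ admits, via a Schramm-type circle packing theorem in the unimodular setting, a jointly unimodular circle packing. The dichotomy I need is that the packing is of \emph{parabolic} type (carrier equal to all of $\R^2$) precisely when $T$ is invariantly amenable, and of hyperbolic type (carrier equal to the open unit disc) otherwise. Taking the nerve of this packing yields an isometry-invariant, locally finite embedding of $T$, hence of $G$, into $\R^2$ with no accumulation point.

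The main obstacle I foresee is pinning down the parabolic-versus-hyperbolic dichotomy for circle packings of one-ended unimodular planar triangulations of merely finite expected degree, since the existing circle packing theory in the unimodular framework is usually stated under bounded degree. I would circumvent this either by a truncation argument (restrict to the subgraph induced by vertices of degree $\leq k$, circle pack, and pass to a weak limit as $k\to\infty$ while controlling the carrier) or by exploiting a direct extremal-length characterization of the type in terms of invariant amenability. A secondary bookkeeping point is to check that the ``isometry-invariant'' property survives the triangulation reduction and the circle packing construction; this is precisely what the joint unimodularity of the decoration, introduced earlier in the paper, is designed to formalize.
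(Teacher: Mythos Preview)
Your sufficiency argument has a genuine gap. You propose to embed $G$ via the circle packing of a unimodular triangulation $T\supset G$, relying on the parabolic type to get carrier $\R^2$. The problem is not bounded degree but \emph{rigidity}: in the parabolic case the He--Schramm packing is unique only up to isometries \emph{and dilations} of $\R^2$, not up to isometries alone. There is therefore no canonical scale, and no way to turn the packing into an isometry-invariant (or even unimodular) random embedded graph in $\R^2$ --- the ``jointly unimodular circle packing'' you invoke exists only up to similarity, which is too coarse. The paper makes exactly this point at the start of Section~4, poses the existence of an invariant circle packing in the amenable case as an open question, and records that Khezeli has since found a counterexample. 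So the circle-packing route cannot be rescued.

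The paper's construction in the amenable case is entirely different: it uses amenability to put a unimodular oriented copy of $\Z$ on $V(G)$, transfers this to an invariant copy of $G$ on the points of a Poisson process in $\R^2$ (via an invariant $\Z$-ordering of the Poisson points), and then embeds the edges by hand through a recursive procedure driven by a nested sequence of random dyadic square partitions of $\R^2$, taking care that no accumulation points arise. The tiling version is then obtained not from Voronoi cells of circles but by a barycentric-type subdivision of the faces of this embedding.

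Your necessity argument is in the right spirit but overcomplicated. The paper simply observes that an invariant nested sequence of dyadic $2^n\times 2^n$ square partitions of $\R^2$ pulls back (via the embedding with no accumulation points) to a unimodular finite exhaustion of $G$, which is one of the equivalent forms of invariant amenability. No edge-length or annulus counting is needed.
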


One source of interest in invariant random embeddings of unimodular random graphs is examples such as the Uniform Infinite Planar Triangulation (UIPT), where invariant embeddings of some control of the edge length distribution would have some far-reaching consequencess. Our result can be seen as a step in this direction, since Theorem \ref{uj} applies for the UIPT, as stated in Corollary \ref{UIPT}.


When $\G$ has an embedding into the Euclidean or hyperbolic plane, the relative location of the embedded vertices and edges by this embedding from the viewpoint of each vertex can be used to decorate the vertices. If the decorated graph is still unimodular, we call the embedding unimodular. The more precise definition is given in the next subsection.


\begin{theorem}\label{dichotomy0}
An ergodic unimodular random one-ended planar graph $\G$ of finite expected degree has a unimodular locally finite embedding
\begin{itemize}
\item into the Euclidean plane if and only if $\G$ is invariantly amenable,
\item into the hyperbolic plane if $\G$ is invariantly nonamenable.
\end{itemize}
\end{theorem}

One can construct the embedding so that every edge is mapped into a broken line segment (piece-wise geodesic curve).
We mention that here and in the next theorem, the ``only if" part is missing from the second claim because of examples such as Example \ref{example_strict}. With some extra condition this could be ruled out and have a full characterization in the theorems; see the discussion after the example. 

\begin{theorem}\label{dichotomy}
An ergodic unimodular random one-ended planar graph $\G$ of finite expected degree can be represented by a unimodular locally finite tiling 
\begin{itemize}
\item in the Euclidean plane if and only if $\G$ is invariantly amenable, 
\item in the hyperbolic plane if $\G$ is invariantly nonamenable.
\end{itemize}
\end{theorem}
The tilings guaranteed by the theorem are such that the expected area of the tile containing the origin is finite.
Similarly to embeddings, we say that a tiling is locally finite if every bounded open subset of the plane intersects finitely many tiles. 
We will give precise definitions later in this section. We mention that the tiles in the above theorem can be required to be bounded polygons.

Theorems \ref{dichotomy0} and \ref{dichotomy} provide essentially complete dichotomic descriptions for the one-ended case. The cases not covered are those of $\G$ with 2 or infinitely many ends, in which situation a graph may or may not have any invariant embedding into one of the Euclidean or the hyperbolic plane (Remark \ref{infinitely_ended}). We do not treat 2 or infinitely many ends here, to avoid technical distractions from our main point.

In the case of invariantly amenable graphs, we will first construct an invariant embedding into the Euclidean plane, starting from a suitable invariant point process as the vertex set. This invariant embedded graph automatically defines a unimodular embedding. On the other hand, for invariantly nonamenable graphs, a unimodular embedding into the hyperbolic plane will be constructed directly, via circle packings. (This circle packing embedding would not work in the Euclidean case,
because Euclidean scalings provide an extra non-compact degree of
freedom,  making it unclear how to achieve unimodularity.) The intuitive claim that such a unimodular embedding is the ``Palm version'' of an invariant embedding 
does not seem to have been established in the hyperbolic setup; a similar statement in the Euclidean case is in the focus of \cite{HL}. This is the reason for the asymmetry in the Euclidean and hyperbolic cases of the above theorems. 
A new preprint by the second author and L\'aszl\'o T\'oth \cite{TT} settles the question of invariant embedding of nonamenable graphs into the hyperbolic plane, together with the cases of 2 and infinitely many ended graphs, left open by the present paper.

\subsection{Definitions}

Our focus is on Euclidean and hyperbolic spaces, hence the definitions will be phrased in this setting. One could ask questions in greater generality, for example by taking Lie groups as underlying spaces. We mention a few such directions in the concluding Section \ref{concluding}.

Without loss of generality {\it from now on we assume that $\G$ is a simple graph}, that is, it has no loop-edges or parallel edges. We also assume that {\it all the degrees are finite} in $\G$.

Next we define unimodularity. First we are using random walks, as this seems to be more natural to describe when unimodular random embeddings are considered.
Let $\mathcalG$ be the collection of all locally finite connected rooted graphs up to rooted isomorphism, and let $\mathcalGG$ be the collection of all locally finite connected graphs with a distinguished ordered pair of vertices up to isomorphism preserving this ordered pair. We often refer to an element of $\mathcalG$ as a {\it rooted graph} $(\G,o)$, without explicitly saying that we mean the equivalence class that it represents in $\mathcalG$. 
Let $(\G,o)$ be a random rooted graph and suppose that $o$ has finite expected degree.
Reweight the distribution of $(\G,o)$ by the Radon-Nikodym derivative ${\rm deg} (o)/\E({\rm deg} (o))$. We will refer to such a reweighting by saying that we {\it bias by the degree of the root}.
Denote the new random graph by $(\G',o')$. Let $X_0=o'$ and
let $X_1$ be a uniformly chosen neighbor of $X_0$. We say that $\G=(\G,o)$ is unimodular if 
$(\G',X_0,X_1)$ has the same distribution as $(\G',X_1,X_0)$. See \cite{BC} for the proof that this is equivalent to the original definition of unimodularity for graphs in \cite{AL}, which we recall in the next paragraph. 
One may consider some decoration or marking on rooted graphs, and extend the above definition in the obvious way. Whenever there is a decoration, given as a function $f$ on $V(\G)$ or as a subgraph $U\leq \G$, we denote this decorated rooted graph by $(\G,o;f)$, $(\G,o;U)$. In case of several decorations, we can list them all after the semicolon.

The original definition of unimodularity, equivalent to the previous one, is the following. Consider an arbitrary Borel function $f:\mathcalGG\to\R^+_0$. Then it has to satisfy the following equation
\begin{equation}\label{eqMTP}
\int\sum_{y\in V(G)} f(G,x,y)d\mu((G,x))=\int \sum_{y\in V(G)} f(G,y,x)d\mu((G,x)).
\end{equation}
Here we do not distinguish between $(G,x)$ as a rooted graph and as a representative of its equivalence class in $\mathcalGG$. This is standard in the literature and will not cause ambiguity.
Equation \eqref{eqMTP} is usually referred to as the ``Mass Transport Principle" (MTP). This equivalent definition of unimodularity naturally extends to decorated rooted graphs, one just has to consider Borel functions $f$ from the suitable space.


Let $M$ be some homogeneous metric space with some point $0$ fixed; for our purpose we can just assume that it is a Euclidean or hyperbolic space. Let $\Isom (M)$ be the group of isometries of $M$. 
For a graph $G$, 
an {\it embedding} $\iota$ of $G$ into a Euclidean or hyperbolic space $M$ is a map from $V(G)\cup E(G)$ that maps injectively every point in $V(G)$ to a point of $M$, and every edge $\{x,y\}$ to (the image of) a simple curve 
in $M$ between $\iota (x)$ and $\iota (y)$, in a way that two such images can intersect only in endpoints that they share. The embeddings that we consider are locally finite, hence $M\setminus \iota(V(G)\cup E(G))$ is open. The connected components of $M\setminus \iota (V(G)\cup E(G))$ are called {\it faces}. 

Let $(\G,o)$ be some unimodular random graph. For almost every $G$, let $\iota_G=\iota$ be some embedding of $G$ into $M$.
For every $v,w\in V(G)$, assign the label $\dist (\iota(v),\iota(w))$. (In the literature it is more standard to assign labels to the vertices or edges, but assigning them to pairs of vertices is also a possibility, which is essentially equivalent to the other notion; see \cite{BHK}.)
The embedded edges can also be encoded, as a label on each edge, coming from a suitable mark space in the space of continuous curves from $[0,1]$ to $M$ up to orientation-preserving isometries of $M$. 
From this labelling, one can reconstruct $\iota$ up to $\Isom (M)$. We say that $\iota$ is a {\it unimodular embedding} of $\G$ to $M$, if the labelling is a unimodular decoration of $\G$. We emphasize again that a $\iota$ and any $\iota'=\gamma\circ\iota$ ($\gamma\in \Isom (M)$) give rise to the same unimodular embedding.

A unimodular random graph $(\G,o)$ is {\it invariantly amenable} (or just {\it amenable}) if for every $\eps>0$ there is a random subset $U\subset V(\G)$ such that $(\G,o;U)$ is unimodular, every component of $\G\setminus U$ is finite, and $\Pr (o\in U)<\eps$. If this property fails to hold, then $\G$ is {\it invariantly nonamenable} (or just {\it nonamenable}). In the rest of this paper we will drop  ``invariantly'' and simply call unimodular random graphs amenable or nonamenable, but the reader should keep in kind that these terms are different from the ones used for a deterministic graph. For the relationship of this notion of amenability to almost sure amenability or anchored amenability, see the discussion in \cite{AL} after the definition, and Theorem 8.5 therein for some equivalents.


Consider a random graph $\calD$ drawn in $M$ in a measurable way, with a distribution that is invariant under $\Isom (M)$. Call such a $\calD$ invariant. The set $\calV\subset M$ of drawn vertices of $\calD$ forms an invariant point process in $M$. Say that the {\it intensity} of $\calD$ is the expected number of points of $\calV$ in a ball with unit volume. (This expectation does not depend on the location of the unit box, because of invariance.) Suppose that $\calD$ has finite intensity, and consider the {\it Palm version} $\calD ^*$ of $\calD$. By this we mean $\calD$ conditioned on $0\in \calV$. By standard theory of point processes and the assumption on finite intensity, this definition makes sense and $\calD^*$ is a random graph drawn in $M$ with a vertex in $0$.
Now let $(\G,o)$ be a unimodular random graph. By an {\it invariant embedding} (or {\it isometry-invariant embedding}) of $\G$ into $M$ we mean a random graph $\calD$ drawn in $M$ of isometry-invariant ditribution that has finite intensity, and with the property that $(\calD^*,0)$ viewed only as an element of $\mathcalG$ has the same distribution as $(\G,o)$. (The fact that $(\calD^*,0)$ is unimodular has been well-known, see Example 9.5 in \cite{AL}.)

\begin{remark}\label{Mecke}
While the term ``unimodular embedding'' seems to be used here for the first time, similar notions existed in the point process literature. A unimodular embedding tells the location of all embedded edges and vertices from the viewpoint of the root vertex. Specifically, the embedded vertices can be thought of as a point process with a point in the origin, and up to equivalence by isometries fixing the origin. The notion of {\it point-stationarity}, introduced by Thorisson \cite{Th} for processes with a point at the origin, requires an invariance under rerooting, similarly to the random walk definition for unimodularity. In fact, the definition of point-stationarity for point processes applies to embedded graphs right away, and the existence of a unimodular embedding is equivalent to the existence of a point-stationary graph whose underlying graph has the same distribution as the given graph. Our observation (from \cite{AL}) that the Palm version of an isometry-invariant embedding is unimodular has been essentially known since Mecke \cite{Me}, whose intrinsic characterization shows that the Palm version of a (translation-)invariant point process is always point-stationary.
\end{remark}

We will denote the Euclidean plane by $\R^2$, and the hyperbolic plane by $\H^2$. Note that an invariant embedding of a unimodular graph automatically has finite intensity, since it was required for the definition to make sense. 
We are interested in embeddings where in addition, no bounded open set is intersected by infinitely many embedded edges, that is, {\it locally finite embeddings}. The notion of local finiteness is invariant under isometries, so one can define it for unimodular embeddings as well.
By a {\it tiling} we mean a collection of pairwise disjoint connected polygons (``tiles'') such that the union of their closures is $M$. We will be interested in tilings where every compact subset of $M$ is intersected by only finitely many types. A tiling defines a graph where the tiles are the vertices and two of them are adjacent if they share some nontrivial line segment on their boundary.
The definition of invariant tilings that represent $(\G,o)$ is similar to that of invariant embeddings. Namely, suppose that a random tiling is isometry-invariant and the tile of the origin has finite area almost surely. Take a uniform random point in every tile and consider the Palm version of this point process together with the tiles on it. If the rooted graph defined by this tiling (rooted at the tile of the origin) has the same distribution as $(\G,o)$ then
we say that the invariant tiling represents $(\G,o)$.


\begin{remark}\label{infinitely_ended} 
It is well-known that an infinite unimodular random graph can have only 1, 2 or infinitely many ends, \cite{AL}. 
As mentioned earlier, Theorems \ref{dichotomy0} and \ref{dichotomy} do not cover the case of 2 ends and infinitely many ends.
If a unimodular random planar graph $\G$ has infinitely many ends almost surely, then it is nonamenable. There are examples where a unimodular embedding into $\H^2$ is possible, and examples when it is not (and similarly for invariant embeddings and for tilings). For the latter, let $G$ be the free product of the edge graph of a transitive hyperbolic tiling and a single edge, and $\G$ be supported in this single transitive graph. It is easy to check that any planar embedding of this graph is such that the embedded vertices have infinitely many accumulation points. Hence there is no locally finite invariant embedding or unimodular embedding for this graph. On the other hand, the 3-regular tree $T_3$ does have an invariant embedding into $\H^2$, which simply gives rise to a unimodular embedding. To see such an invariant embedding, take the Ford horocyclic tiling (see, e.g., Figure 3.3 in \cite{LP}), let $F$ be a fundamental domain that contains the origin, and choose a random isometry that maps the origin to a point of $F$ according to Haar measure. Consider
the centers of the interstices (bounded pieces in the complement of the disks) and
the straight lines between neighboring ones (which will all have the same length), and apply the random isometry to this embedded graph of $T_3$.
\end{remark}


The next example shows that ``zero intensity" is possible in case of {\it unimodular} embeddings. 
\begin{example}\label{example_strict}
Consider $\G$ to be $\Z$ almost surely, and let $M$ be the hyperbolic plane. Take an infinite geodesic $\gamma$ in $M$, fix a point $0\in \gamma$, and let $g$ be an isometry of $M$ that preserves $\gamma$ and maps $0$ to some $g(0)\not=0$. Consider the embedded graph with vertex set $\{g^i(0), i\in\Z\}$ and embedded edges being the pieces of $\gamma$ between pairs $g^i(0)$ and $g^{i+1}(0)$. One can check that this way we defined a unimodular embedding of $\Z$ into $M$. However, it is not possible to embed $\Z$ (or any amenable graph) into $M$ in an isometry-invariant way.
\end{example}
One could define intensity for unimodular embeddings. We will not need this, but it could be defined as the reciprocal of the unique number $r$ that ensures that the stable allocation on the embedded vertices with cell-volume-limit $r$ is a full allocation (with $r=\infty$ standing for zero intensity); see \cite{HP} for the definition. Forbiding zero intensity, one could rule out 
pathologies as Example \ref{example_strict}. Then Theorems \ref{dichotomy0} and \ref{dichotomy} would become full characterizations (by a suitable modification of the proofs in Section \ref{proofsofmain} for the added ``only if'' parts). 

A {\it cyclic permutation} of $n$ elements is a permutation that consists of a single cycle of length $n$.
A {\it combinatorial embedding} of a planar graph $G$ is a collection of cyclic permutations $\pi_v$ of the edges incident to $v$ over $v\in V(G)$, and such that 
there is an embedding of $G$ in the plane where the clockwise order of the edges on every vertex $v$ is $\pi_v$ for every $v\in V(G)$. 
A combinatorial embedding is unimodular if the decoration $\{\pi_v\}$ is a unimodular decoration. The notion of unimodular combinatorial embeddings (and maps) was implicitly introduced in Example 9.6 of \cite{AL}. Note that this definition does not use any underlying metric on the plane, as it defines an embedding only up to homeomorphisms. For a given edge $e$, choose an orientation of $e$ with $e^-$ being the tail and $e^+$ the head.
Consider the edge $\pi_{e^-} (e)$ oriented such that $e^-$ is its head. Repeat this procedure for this new oriented edge, and iterate until we arrive back to $(e^-, e^+)$. Call the resulting sequence of edges a {\it face} of
the combinatorial embedding. One can check that the faces of actual embeddings coincide with the bounded domains surrounded by the respective faces of the corresponding combinatorial embedding.
In \cite{Ti3} it is shown that being unimodular and planar guarantees the existence of a unimodular combinatorial embedding in general, see Theorem \ref{theorem3}.

\begin{remark}\label{implications}
The existence of a unimodular combinatorial embedding does not automatically provide us with a unimodular embedding into $\R^2$ or $\H^2$, but the other direction is obvious. To summarize: an isometry-invariant embedding defines a unimodular embedding (as verified in the proof of Theorem \ref{dichotomy0}), and a unimodular embedding trivially defines a unimodular combinatorial embedding. None of the other directions holds a priori.
\end{remark}

For a given graph $G$, a {\it circle packing} representation of $G$ is a collection of circles in the plane such that the circles are in bijection with $V(G)$ and two circles are tangent if and only if the corresponding vertices are adjacent in $G$. The {\it nerve} of a circle packing is the graph that it represents.
For a given circle packing $P$ in $\R^2$, consider
the union of all the disks in $P$ and their boundaries and its further union with all the bounded connected pieces (interstices) bounded by finitely many circles in $P$. Call the resulting set the {\it carrier} of $P$. 
In the first case they called the graph {\it CP parabolic}, while in the second case they called it {\it CP hyperbolic}. They found several  characterizing properties for this duality, such as the recurrence/transience of simple random walk. Earlier, Schramm \cite{S} proved the uniqueness of these circle packings up to some transformations.

\begin{theorem}{{\rm (He-Schramm, \cite{HS}, \cite{HS2}, Schramm, \cite{S}) }}\label{He_Schramm}
Let $G$ be a one-ended infinite planar triangulated graph. Then $G$ either has a circle packing representation 
whose carrier is the plane
or it has a circle packing representation 
whose carrier is the unit disk, but not both.
\begin{itemize}
\item In the former case (when $G$ is parabolic), the representation in the plane is unique up to isometries and dilations.
\item In the latter case (when $G$ is hyperbolic),  the representation in the unit disk is unique up to M\"obius transformations and reflections fixing the disk. 
\end{itemize} 
\end{theorem}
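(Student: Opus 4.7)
The plan is to establish the theorem in two phases: first, construct at least one circle packing of $G$ in some canonical simply-connected domain, and second, prove rigidity, from which the dichotomy and uniqueness follow.

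For the first (existence) phase, I would proceed by finite approximation. Exhaust $G$ by a growing sequence of finite triangulated subgraphs $G_n$ containing a fixed root vertex $o$, augment each $G_n$ to a triangulation of the sphere by adding a single exterior face, and apply the Koebe--Andreev--Thurston theorem to obtain a circle packing $P_n$ of each augmented $G_n$ on $S^2$. Using a M\"obius transformation of $S^2$, normalize so that the circle corresponding to $o$ is centered at the origin with radius $1$ and the outer face is mapped to the complement of a packed disk or to a half-plane. The crucial input now is He and Schramm's \emph{Ring Lemma}: if $v$ has $d$ neighbors in a finite packing, the ratios of adjacent radii are bounded by a constant depending only on $d$. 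Since degrees along bounded-distance neighborhoods of $o$ are fixed by the finite graph $G$, the packings $P_n$ are precompact on every compact set. A diagonal subsequence argument yields a limit circle packing $P$ of the infinite triangulation $G$. The carrier of $P$ is an open simply-connected subset of the plane (since $G$ is one-ended and triangulated); by the Riemann mapping theorem and the normalization, we may arrange that this carrier is either $\R^2$ or $\U$.

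For the second (rigidity) phase, I would invoke Schramm's uniqueness theorem, which states that for a triangulation of an open simply-connected proper subset of $\C$, the circle packing is unique up to M\"obius transformations preserving the carrier. Applied in our setting, this immediately yields the stated uniqueness: in the $\R^2$ case the normalizers are similarities (translations, rotations and dilations), and in the $\U$ case they are M\"obius automorphisms of $\U$ together with reflections. Uniqueness also forces the dichotomy: if $G$ admitted packings with both types of carrier, one could compose one packing with the ``inverse'' of the other to get a homeomorphism from $\R^2$ onto $\U$ that sends circles to circles, contradicting Liouville's theorem on conformal maps.

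The main obstacle I expect is the compactness step used to extract the limit packing, and in particular verifying that the limit's carrier is honestly either $\R^2$ or $\U$ rather than some degenerate or multiply-connected region. The Ring Lemma supplies local control, but to rule out the carrier being a proper subdomain of $\R^2$ that is not conformally equivalent to $\U$ (e.g.\ a strip), one needs to use the one-ended and triangulated hypotheses together with an analysis of the boundary behavior of the limit packing; this is where the technical heart of He--Schramm lies. The uniqueness and exclusivity, by contrast, reduce to a relatively clean application of Schramm's rigidity theorem once the existence is in hand.
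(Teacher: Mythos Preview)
The paper does not contain a proof of this theorem at all: it is stated as a citation of results of He--Schramm \cite{HS}, \cite{HS2} and Schramm \cite{S}, with a further pointer to \cite{N}, and is then used as a black box in the proof of Theorem~\ref{hyper_embed}. So there is nothing in the paper to compare your sketch against.

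That said, a couple of comments on your sketch itself. The overall architecture --- existence by compactness of normalized finite packings, then rigidity via Schramm's theorem --- is indeed the shape of the original arguments. Two points to flag. First, the Ring Lemma you invoke is due to Rodin and Sullivan, not He--Schramm. Second, your argument for the ``not both'' part is not right as stated: Liouville's theorem on conformal maps is a statement about dimensions $\geq 3$; in the plane there are plenty of conformal bijections $\R^2\to\U$, so ``a homeomorphism sending circles to circles'' is not by itself a contradiction. The exclusion of the two types in He--Schramm goes through a genuinely different mechanism (e.g.\ the type problem for the packing, linked to recurrence/transience of simple random walk on $G$, or a discrete extremal length argument), and does not reduce to a one-line appeal to conformal rigidity. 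If you want to reconstruct the proof rather than cite it, that is the step that needs real work.
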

See also \cite{N}.

\subsection{Connections to past research}

Our topic is at the meeting point of isometry-invariant point processes and unimodular random graphs. The former has been a widely studied subject for many decades (see, e.g., the monographs \cite{Th}, \cite{LaPe}), mostly in the setup of stationary (translation-invariant) point processes in Euclidean spaces.  
A key problem in the present work is to represent certain graphs on the configuration points of a point process, in a covariant and measurable way. Questions of this flavor have been extensively studied for particular classes of graphs in the past, such as one-ended trees, biinfinite paths (\cite{FLT}, \cite{HP0}, \cite{Ti}) or perfect matchings (see, e.g., \cite{HPPS}, \cite{Ho}).
Note however, that while in these settings only graphs on the vertex set have to be defined, in our context we also need to {\it embed the edges} into the underlying space. 

Unimodular random graphs were first defined in \cite{AL}, but similar ideas existed earlier (see references in \cite{AL}).
They have attracted a lot of attention because of their connection to approximability by finite graphs (see \cite{P} for the importance of such approximability in group theory), because of closely related notions in other areas (such as graphings in measurable group theory, see, e.g., \cite{L}), and for being a natural generalization of group-invariant percolation.
One can think of the notion of unimodular random graphs as a generalization of ``percolation on a transitive graph with a unimodular group of automorphism, viewed from a fixed vertex'', which makes it analogous to the Palm version of a point process. The direct connection is that invariant point processes as well as unimodular random graphs satisfy the Mass Transport Principle. That the Palm version of a random graph invariantly drawn in the plane is always unimodular as a planar graph was already proved by Aldous and Lyons \cite{AL} (see also our Remark \ref{Mecke}), and our main results can be regarded as the converse to this claim.

A study of unimodular random planar graphs was initiated by Angel, Hutchcroft, Nachmias and Ray in \cite{AHNR} for the class of triangulations,
and they showed that for a locally finite ergodic unimodular triangulated planar simple graph, 
being CP parabolic is equivalent to invariant amenability. 
In \cite{AHNR2}, unimodular planar graphs were further studied, without the assumption of being triangulated, but with the assumption that the unimodular graph comes together with a unimodular combinatorial embedding, in which case this joint object is called a {\it unimodular planar map}. Several criteria were identified as equivalents to invariant amenability. 
Theorems \ref{dichotomy0} and \ref{dichotomy} can be thought of as further examples of the dichotomy.

As an example, consider the Uniform Infinite Planar Triangulation (UIPT), first defined by Angel and Schramm in \cite{AS}. The UIPT is a random graph that is unimodular (because it arises as the local limit of finite graphs) and planar (because all these finite graphs are planar), and moreover, the graph comes together with a unimodular combinatorial embedding (inherited from the finite graphs). Nevertheless, a combinatorial embedding defines an actual embedding into the plane only up to homeomorphisms. 
Now, if we are given not only the topology but also some homogeneous metric on the plane (and the the only homogeneous simply connected Riemannian manifolds of infinite volume, up to scaling, are the Euclidean and the hyperbolic planes, see Theorem 3.8.2 in \cite{Thu}),
then it is natural to require the unimodular embedding to exist not just up to homeomorphisms but up to isometries. This latter, a unimodular embedding up to isometries, is what we defined simply as a unimodular embedding. If a unimodular embedding exists, one can further ask whether an isometry-invariant embedding exists with this given unimodular embedding. This last question takes us back to the theory of point processes, where analogous questions were addressed: is a unimodular point process (that is, a point process with a point in the origin and satisfying a MTP) always the Palm version of a isometry/translation-invariant point process? This question was settled by Heveling and Last \cite{HL} in the positive (who use the term point-shift stationary for what we called here a unimodular point process) for translation invariant processes. 
 
Here we are interested in planar graphs, but it is reasonable to ask what happens in higher dimensions. In \cite{Ti2} it is shown (in the dual language of tilings) that every one-ended amenable unimodular transitive graph has an isometry-invariant embedding into $\R^d$ when $d\geq 3$. The proof generalizes from transitive to random unimodular graphs. To our knowledge, the nonamenable (hyperbolic) case is open; Question 3.6 of \cite{Ti2} may be relevant in this regard.

\section{Unimodular planar triangulation of unimodular planar graphs}

The following theorem was proved in \cite{Ti3}.

\begin{theorem}{{\rm (\cite{Ti3})} }\label{theorem3}
Let $(\G,o)$ be a unimodular random planar graph of finite expected degree. Then $(\G,o)$ has a unimodular combinatorial embedding into the plane.
\end{theorem}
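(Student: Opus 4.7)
The strategy I would take is to reduce the problem to $3$-connected planar graphs, where Whitney's theorem forces the combinatorial embedding to be unique up to a global reflection, and to arrange that every choice along the way is a Borel factor of $(G,o)$ together with an iid decoration, so that unimodularity of the output comes for free. As a first step I attach iid $[0,1]$ labels to the vertices and edges of $G$ to obtain a unimodular enrichment $(\tilde G, o)$; any Borel factor of $(\tilde G,o)$ will then yield a unimodular decoration of $(G,o)$.

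Next, I would construct a canonical decomposition of each $2$-connected block of $G$ into pieces that are either $3$-connected planar graphs, cycles (``polygons''), or multi-edges (``bonds''), joined by a tree-like combinatorial structure along $2$-separations --- an analogue of the SPQR tree from finite planar graph theory. The iid labels are used to break ties among $2$-separations and to turn the decomposition into a factor of $(\tilde G, o)$. On each $3$-connected piece Whitney's theorem pins down the cyclic order around every vertex up to a single global reflection, which I select using the iid labels; on polygon and bond pieces the cyclic arrangement of virtual edges is chosen by the iid labels as well. Finally I glue the local embeddings along the decomposition tree, using iid randomness at each virtual edge to decide the gluing orientation. The resulting cyclic permutations $\{\pi_v\}_{v\in V(G)}$ realise a planar embedding of $G$ and are a factor of $(\tilde G, o)$, hence jointly unimodular with $G$.

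The main obstacle is the SPQR-type decomposition itself: for infinite $2$-connected graphs one cannot simply iterate splitting along $2$-separations, since the process need not terminate. I would get around this by defining the $3$-connected components globally, for example as equivalence classes of edges under the relation ``not separated by any $2$-cut'', and then exhibit the resulting structure as a tree of pieces that is a Borel factor of $(\tilde G, o)$. Verifying that this decomposition exists, is compatible with the unimodular structure, and gives rise to a consistent global combinatorial embedding is the core technical work; once it is in place, the remaining steps are essentially Mass Transport Principle bookkeeping, and checking that the assembled $\{\pi_v\}$ really corresponds to a planar embedding reduces to verifying planarity piece-by-piece along the decomposition tree. An alternative that I would keep in reserve is a compactness argument: approximate $(G,o)$ by finite planar subgraphs and their uniformly random combinatorial embeddings, then extract a subsequential weak limit in the space of rooted planar graphs with rotation systems; this sidesteps SPQR trees but places the burden on showing that the resulting limit is a unimodular \emph{planar} rotation system, which is itself nontrivial.
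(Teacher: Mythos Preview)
The paper does not prove this statement; it is simply quoted from \cite{Ti3}, so there is no in-paper argument to compare against. Your plan---reduce via the block and Tutte decompositions to $3$-connected pieces, apply the Whitney--Imrich uniqueness theorem on each piece to pin down the rotation system up to a single reflection, and resolve the remaining binary choices with auxiliary iid randomness---is the natural route and is essentially what is carried out in that reference. The point you identify as the main obstacle, namely setting up the Tutte/SPQR decomposition of an infinite $2$-connected graph as a Borel factor, is indeed where the technical work lies.

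One caveat on your framing. For an \emph{infinite} $3$-connected piece $P$, ``selecting the global reflection using the iid labels'' on vertices and edges is not obviously realisable as a Borel factor in the way you suggest: each vertex can cast a local vote between the two mirror rotation systems, but aggregating infinitely many iid votes into a single bit, equivariantly and measurably, is itself a nontrivial step (there is no vertex of minimal label, no well-defined majority, etc.). The clean fix is to attach an independent fair coin directly to each node of the decomposition tree---the tree is a factor of $G$, hence unimodular, and adding iid marks to its vertices preserves unimodularity---and read the reflection off that coin. The rotation system is then a measurable function of $G$ together with these extra marks, and unimodularity is checked via the Mass Transport Principle. This is a cosmetic adjustment to your plan rather than a genuine gap.
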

Recall that having a unimodular combinatorial embedding is a weaker requirement than having a unimodular or an invariant embedding, see Remark \ref{implications}.

\begin{theorem}\label{triangulate}
Let $(\G,o)$ be a unimodular planar graph of finite expected degree. Then there is a unimodular decorated graph $(\G^+,o^+, S)$ where $S$ is a connected subgraph of $\G^+$ and such that $(S,o^+)$ conditioned on $o^+\in S$ has the same distribution as $(\G,o)$. 
and $\G^+$ is a planar triangulation of finite expected degree. If $\G$ has one end then $\G^+$ has one end.
\end{theorem}

\begin{proof}
By Theorem \ref{theorem3}, $\G$ has a unimodular combinatorial embedding into the plane. Fix such an embedding. The collection of faces is also jointly unimodular with $\G$.

Let $F$ be an unbounded face. By exploring the vertices of $F$ along the boundary, a function from $\Z$ to the boundary is obtained which is not necessarily injective. Fix such a bijection, choose $\xi\in\{0,1\}$ uniformly at random, and for every pair $\{2k+\xi, 2k+\xi+1\}$ ($k\in\Z$), add a new vertex $v_k$ to the graph, and connect it to $2k+\xi$, $2k+\xi-1$ and $2k+\xi+1$. Finally, add an edge between $v_k$ and $v_{k+1}$ for every $k\in\Z$. Now, in the resulting new graph we have a new infinite face, whose boundary is the biinfinite path induced by $\ldots,v_{-1},v_0,v_1,\ldots$. Repeat the previous procedure for this biinfinite path, and so on, ad infinitum. 

For every bounded face $F$ do the following. If $F$ has $n$ edges on its boundary, then add a new cycle $C$ of length $[(n+1)/2]$ inside this face. Add edges that connect these new vertices to the boundary vertices of $F$ so that planarity is not violated, in such a way that we connect every vertex of $C$ to the vertices of two consecutive edges of $F$, except maybe one vertex of $C$ which is connected to one edge of $F$.
Then for every vertex $x$ of $F$ and two consecutive edges of $F$ containing $x$, there are at most two new edges added to $x$ between these two consecutive edges. Hence the degree of $x$ is at most trippled by the end of this procedure.
Repeat this step for the new face, surrounded by $C$, as long as $|C|\geq 6$. Otherwise just add a new vertex and connect it to every vertex of $C$. In each step, if there are more than one options, choose one of the options randomly and uniformly.


When doing this for every $F$, in the limit we get a triangulation $\G^+$.
All the operations preserved planarity, hence $\G^+$ is planar.
One can define a function $\tau : V(\G^+)\to V(\G)$ (possibly using extra randomness) such that $\E |\tau^{-1} (o)|<\infty$. Therefore, by similar arguments to Example 9.8 of \cite{AL}
(see Subsection 1.4 in \cite{BPT} for more details), $\G^+$ is unimodular with the random root $o^+$ chosen as follows: we first bias $(\G,o)$ by weights proportional to $|\tau^{-1} (o)|$, and then $o^+$ can be sampled by choosing a uniform element of $\tau^{-1} (o)$.

From the same argument in \cite{BPT} it follows that the expected degree of $o^+$ is bounded by $\max \{3D,7\}$, where $D$ is an upper bound on the expected degree of the root in $(\G,o)$.  Here we are using the fact that the vertices in $V(\G)$ have received at most twice as many new edges as their degree in $\G$ (at most two new edges between each pair of consecutive edges on a face), and every new vertex has at most 7 incident edges.
\end{proof}

Before proceeding, let us state some consequences of
Theorem \ref{theorem3} related to percolation, as the methods of 
\cite{BS} become available for the planar graph that have a unimodular combinatorial embedding. This corollary was unnoticed in \cite{Ti3}, the source of Theorem \ref{theorem3}, so we present it here.
Call a planar graph $G$ {\it edge-maximal}, if for any $x,y\in V(G)$, such that $\{x,y\}\not\in E(G)$ and $x\not=y$, $G\cup \{\{x,y\}\}$ is nonplanar. Edge maximality means that any planar embedding of $G$ is triangulated.

\begin{corollary}\label{percolation}
Let $\G$ be an ergodic nonamenable unimodular random planar graph with finite expected degree and one end. 
Then there exist $p_c$ and $p_u>p_c$ such that Bernoulli($p$) percolation has no infinite component for $p\in [0,p_c]$, it has infinitely many infinite components for $p\in (p_c,p_u)$, and it has a unique infinite component for $p\in [p_u,1]$.
If $\G$ is edge-maximal and we percolate on the vertices, then $p_c<1/2$.
\end{corollary}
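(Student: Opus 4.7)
The plan is to leverage Theorem~\ref{hyper_embed} to realize $G$ as a unimodularly embedded planar graph in $\H^2$ with no accumulation points, and then adapt the planar-duality arguments of Benjamini--Schramm \cite{BS} to the unimodular random setting. First, apply Theorem~\ref{triangulate} to obtain a jointly unimodular triangulated supergraph $G^+ \supseteq G$, and then Theorem~\ref{hyper_embed} to $G^+$; this gives a unimodular embedding of $G^+$ (and hence of $G$) in $\H^2$ whose face structure and planar dual $(G^+)^*$ are jointly unimodular decorations. In particular $(G^+)^*$ is a $3$-regular, one-ended, nonamenable unimodular random graph of finite expected degree, and Bernoulli$(p)$ percolation on $G$ sits inside Bernoulli$(p)$ percolation on $G^+$ in a jointly unimodular way.

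Positivity of $p_c$ and absence of an infinite cluster at $p = p_c$ follow from standard arguments for nonamenable unimodular random graphs. The heart of the proof is the strict inequality $p_u > p_c$, for which I would adapt the primal/dual argument of \cite{BS}: for $p$ in the interval $(p_c(G),\, 1 - p_c((G^+)^*))$, Bernoulli$(p)$ percolation on $G$ has an infinite open cluster and the coupled Bernoulli$(1-p)$ dual percolation on $(G^+)^*$ has one too. The geometric input from the unimodular embedding is that an infinite dual open path in $\H^2$ topologically separates the ambient space, preventing primal clusters on opposite sides from being identified; combined with invariance and the mass transport principle this forces infinitely many primal infinite clusters, so $p < p_u$. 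Non\-emptiness of this interval, i.e.\ $p_c(G) + p_c((G^+)^*) < 1$, follows from the Cheeger-type isoperimetric estimates on the nonamenable unimodular graphs $G$ and $(G^+)^*$ by the standard MTP argument of \cite{BS}. Uniqueness on $[p_u,1]$ then follows from the H\"aggstr\"om--Peres monotonicity of uniqueness (valid in the unimodular setting via MTP), with the endpoint $p = p_u$ handled by Schonmann's theorem in its unimodular form.

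For the edge-maximal site percolation claim, observe that if $G$ is edge-maximal then every planar embedding of $G$ is a triangulation, so all faces are triangles whose three vertices are pairwise adjacent in $G$. Hence the matching graph of $G$ in Kesten's sense coincides with $G$ itself, and site percolation on $G$ is self-matching. Matching duality then gives $p_c^{\mathrm{site}} + p_c^{\mathrm{site}} \leq 1$, so $p_c \leq 1/2$; the strict inequality $p_c < 1/2$ comes from the same nonamenability-based MTP improvement as in the edge case, applied now to the self-matching pair rather than to a primal/dual pair.

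The main obstacle will be checking that the planar Benjamini--Schramm arguments, originally phrased for transitive planar graphs, carry over cleanly to the unimodular random setting. Concretely, one must verify that $(G^+)^*$ is a well-behaved one-ended nonamenable unimodular random graph (joint unimodularity of the face and dual structure with $G^+$ is precisely what Theorem~\ref{hyper_embed} supplies), that the planar-topological separation step survives when the embedding itself is random (this uses the no-accumulation property in an essential way so that the image of an infinite dual path is an actual closed subset of $\H^2$ with two components in its complement), and that the MTP-based proofs of the isoperimetric inequalities on both primal and dual faithfully replace the averaging arguments used in the transitive setting.
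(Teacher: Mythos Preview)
Your overall strategy is exactly the paper's: the paper simply cites Theorem~8.11 and Theorem~8.12 of \cite{AL} (the latter being the unimodular-random adaptation of \cite{BS}) together with Theorem~6.2 of \cite{BS}, observing that the only missing ingredient---a unimodular planar embedding with no accumulation points---is supplied by Theorem~\ref{hyper_embed}. What you have written is essentially an outline of the content of those cited results, so the approaches coincide.

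There is, however, one technical slip worth fixing. In the $p_c<p_u$ step you run the primal/dual argument with the pair $\bigl(G,\,(G^+)^*\bigr)$, but there is no natural edge-by-edge coupling between Bernoulli bond percolation on $G$ and Bernoulli bond percolation on $(G^+)^*$: the standard planar duality couples a graph with the dual of \emph{its own} embedding, so an open edge of $G$ corresponds to a closed edge of $G^*$, not of $(G^+)^*$. Without that coupling, an infinite open path in $(G^+)^*$ need not avoid open $G$-edges, and the topological separation step fails. The fix is immediate: Theorem~\ref{hyper_embed} already gives a unimodular embedding of $G$ itself (obtained by discarding the extra circles of $G^+$), and the dual $G^*$ of that embedding is the correct partner; it is jointly unimodular with $G$, one-ended and nonamenable by the results of \cite{AHNR2}, and the Benjamini--Schramm argument then goes through verbatim to give $p_c(G)+p_c(G^*)<1$ and hence $p_c(G)<p_u(G)$. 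Your treatment of the edge-maximal site-percolation claim via self-matching is the right idea and matches the extension of Theorem~6.2 of \cite{BS} that the paper invokes.
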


\begin{proof}
The claim that there is no infinite component in $p_c$ is known to be true for any nonamenable graph, see Theorem 8.11 in \cite{AL}. That $p_c<p_u$ and there is uniqueness in $p_u$ are proved for graphs where a unimodular planar combinatorial embedding is given in Theorem 8.12 in \cite{AL} (based on \cite{BS}); combined with Theorem \ref{theorem3} (a result from \cite{Ti3}) they show the claims here. Finally, for the claim about $p_c<1/2$, the proof of Theorem 6.2 in \cite{BS} extends, once we have a unimodular combinatorial embedding as in Theorem \ref{theorem3}. 
\end{proof}

A bound $p_c<1/2$ is proved in \cite{HaPa} for general graphs with a minimum degree requirement.

\section{Invariant circle packing representations of nonamenable graphs}


The next theorem is a simple consequence of results by He and Schramm.


\begin{theorem}\label{hyper_embed}
Suppose that $\G=(\G,o)$ is a one-ended nonamenable ergodic unimodular random planar simple graph with finite expected degree. Then $\G$ can be represented by a unimodular circle packing in the hyperbolic plane.
Consequently, $\G$ has a unimodular embedding into the hyperbolic plane $\H$, and $\G$ can be represented by an invariant tiling. 
\end{theorem}


\begin{proof}[Proof of Theorem \ref{hyper_embed}]
We may assume that $\G$ is triangulated. Otherwise apply first the proof for the triangulated supergraph
$\G^+$ in Theorem \ref{triangulate}, and then only keep the circles that represent vertices in $V(\G)$.
We know from He and Schramm \cite{HS} that $\G$ has a unique representation in the hyperbolic plane up to hyperbolic isometries.
From the proof by He and Schramm it follows, as explained in detail in Subsection 3.4.1 of \cite{AHNR}, that 
the hyperbolic radius of the circle representing vertex $v$ is a measurable function of $(G,v)\in\mathcalG$.
Hence the circle packing ($\G$ as a graph marked with the unique circle packing, where the marks are telling the embedding of the vertices together with an extra label representing the radius) is unimodular.

One can turn the circle packing into a tiling of the same adjacency structure by properly subdividing every component of the complement of the disks into finite pieces and attaching them to suitably chosen neighboring disks. We omit the details. 
\end{proof}

\section{Invariant embeddings of amenable unimodular planar graphs}

In this section we will prove the amenable part of Theorem \ref{uj}. We will later use this invariant embedding to constuct a unimodular embedding and prove the amenable parts of Theorems \ref{dichotomy0} and \ref{dichotomy}. One may wonder if a unimodular embedding could be found directly, 
following the lines of the proof for the hyperbolic case, but a key part which does not go through is the following. The uniqueness in the He-Schramm Theorem \ref{He_Schramm} is up to isometries in the nonamenable (hyperbolic) case, and hence it could be used in the construction of the unimodular embedding, which is also defined only up to isometries. Now, in the amenable case, the uniqueness in Theorem \ref{He_Schramm} is only up to isometries {\it and dilations}, which makes the above method fail. 

\begin{quest}
Can every amenable unimodular planar graph be represented by an invariant circle packing in $\R^2$?
\end{quest}
At the time of submission of this manuscript, Ali Khezeli found a negative answer to the question, \cite{Kh}. His counterexample satisfies the stronger property that it has no {\it unimodular} circle packing representation.
The case of UIPT is still open. See also Question \ref{diameter_question}.

Let $\G$ be a unimodular random graph. Let ${\mathcal G}_1, {\mathcal G}_2,\ldots$ be a (random) sequence of partitions of $V(\G)$ such that the collection $(G,{\mathcal G}_1, {\mathcal G}_2,\ldots)$ is unimodular. Say that such a partition sequence 
is a {\it unimodular finite exhaustion}, if it is coarser and coarser, if every class ({\it part}) of every partition is finite, and any two vertices are in the same part of ${\mathcal G}_i$ if $i$ is large enough. 

The definition of amenability for locally finite unimodular random graphs is equivalent to the existence of a unimodular finite exhaustion (see Theorem 8.5 in \cite{AL} and references therein). To see this, let $U_k$ be the random subsets corresponding to $\eps=2^{-k}$ in the definition of amenability in Section \ref{intro}, and let ${\mathcal G}_n$ consist of the connected components of $\G\setminus \{e\in E(\G), e\cap \cup_{k=n}^\infty U_k\not=\emptyset\}$.

\begin{theorem}\label{parab_embed}
Suppose that $(\G,o)$ is a one-ended amenable unimodular planar random graph of finite expected degree.
Then $\G$ has an invariant locally finite embedding into $\R ^2$
such that the image of every edge is a broken line segment. 
\end{theorem}

Some well studied planar unimodular amenable graphs are the {\em uniform infinite planar triangulation} (UIPT) (\cite{AS})
and the augmented critical Galton-Watson tree conditioned to survive (AGW) (see, e.g., \cite{LP}).
 

\begin{corollary}\label{UIPT}
The AGW and the UIPT can be invariantly embedded in the Euclidean space.
\end{corollary}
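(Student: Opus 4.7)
The plan is a direct application of Theorem~\ref{parab_embed} to each of the two models, in the planar ($d=2$) case. All the work reduces to verifying that both the AGW and the UIPT fall under the hypotheses of that theorem: unimodularity, planarity, one end almost surely, finite expected degree, and invariant amenability. Once these are in place, the conclusion is immediate.

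For the AGW, unimodularity is the classical statement that the Kesten spine-decomposition of a critical Galton--Watson tree conditioned to survive (with the root taken in the size-biased way) gives a unimodular random rooted tree. Such a tree has a unique infinite spine and finite bushes, hence exactly one end almost surely. Planarity is automatic. Finite expected root degree follows from assuming the offspring law has finite variance, since then the size-biased offspring distribution has finite mean. Invariant amenability follows because the AGW is recurrent and, for unimodular rooted graphs of finite expected degree, recurrence implies invariant amenability (see Theorem~8.5 of \cite{AL}; alternatively, a subexponentially-growing tree is easily seen to admit a unimodular finite exhaustion).

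For the UIPT, unimodularity is a consequence of the construction as the local Benjamini--Schramm limit of uniform finite triangulations. Planarity holds by definition; one-endedness was proved by Angel--Schramm; and the root-degree is known to have an exponential tail, so the expected degree is finite. Invariant amenability is one of the central outputs of \cite{AHNR}: there it is shown that for a one-ended ergodic unimodular random planar triangulation, invariant amenability is equivalent to CP parabolicity, and the UIPT is CP parabolic (equivalently, recurrent, by Gurel-Gurevich--Nachmias).

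Given these verifications, Theorem~\ref{parab_embed} applies with $d=2$ and produces isometry-invariant embeddings of AGW and UIPT into $\R^2$ with no accumulation points, as claimed. There is no genuine obstacle internal to the corollary; its entire content is already packaged in Theorem~\ref{parab_embed} (and, for the general statement, in Theorem~\ref{uj}). The only substantive citation to be marshalled from outside is the invariant amenability of the UIPT from \cite{AHNR}; everything else about the two models is standard.
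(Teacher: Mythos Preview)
Your proposal is correct and matches the paper's approach exactly: the paper states the corollary without proof, treating it as an immediate application of Theorem~\ref{parab_embed} once one knows that the AGW and the UIPT are one-ended, planar, unimodular, of finite expected degree, and invariantly amenable. You have simply spelled out these verifications (with appropriate references), which is more than the paper itself does.
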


We will need a special case of Theorem 5.1 from \cite{Ti2}, illustrated on Figure \ref{regibol}.

\begin{theorem}{{\rm (\cite{Ti2}) }}\label{fafaktorbol2}
Consider an ergodic one-ended unimodular random tree $(T,o)$ of uniformly bounded degrees. Then there is an invariant locally finite tiling in $\R^2$ that represents $T$.
\end{theorem}

\begin{figure}[h]
\vspace{0.1in}
\begin{center}
\includegraphics[keepaspectratio,scale=0.5]{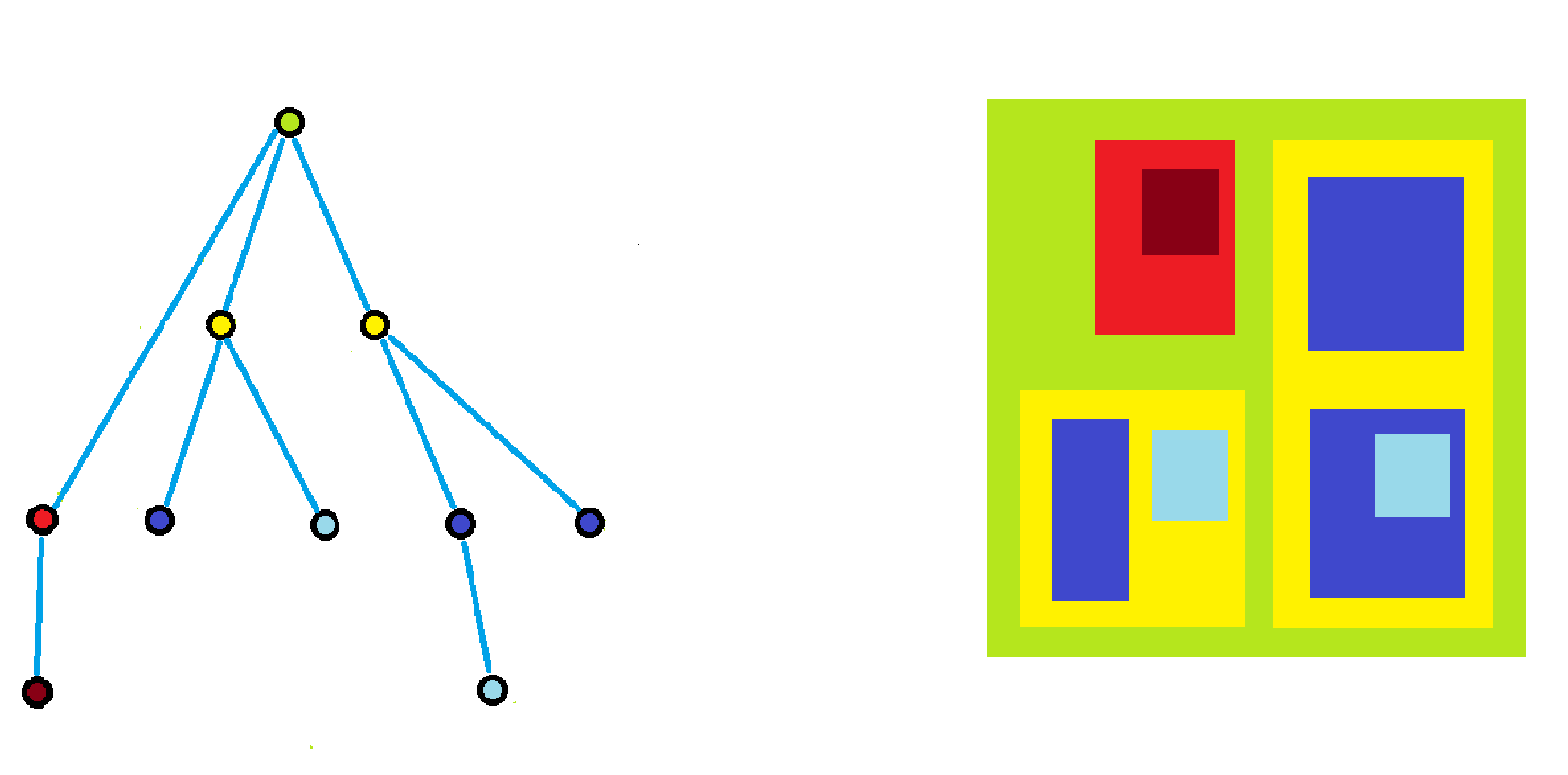}
\caption{Representing a tree by a tiling; image taken from \cite{Ti2}.}\label{regibol}
\end{center}
\end{figure}

\begin{proof}[Proof of Theorem \ref{parab_embed}]
In the following constructions it will often happen that we need to choose some collection of pairwise inner-disjoint curves (broken line segments) connecting some given collection of pairs of points, within some specified bounded domain. We will want to do it so that for a given isometry-invariant and measurable random collection of pairwise disjoint domains in $\R^2$ the resulting collection of line segments over all domains is also invariant and measurable.
Let us describe a method to do so. For each of the specified bounded domains do the following. Fix a random origin in the domain and two uniformly chosen axes for $\R^2$. Then
make the choice for the broken line segments so that the pairwise inner-disjointness is satisfied, and every breaking point of every segment has dyadic coordinates of the form $h2^{-k}$, where $h\in\Z$ and $k\in \Z^+$ is minimal such that the choice of all broken line segments with the above constraints is possible. If there is more than one such choice with this minimal $k$, then choose one of them uniformly at random. So whenever we are making such choices of families of curves, embedded edges etc. in the future, this is how we understand it, without further mention.

Fix some unimodular combinatorial embedding $\Sigma=\{\sigma_v:\, v\in V(\G)\}$
of $\G$ (as given by Theorem \ref{theorem3}), where $\sigma_v$ is the permutation on the edges incident to $v\in V(\G)$. Suppose that an embedding of some subgraph $H$ of $\G$ is given, together with an embedding of all the half-edges from $E(\G)\setminus E(H)$. (By an embedding of a half-edge we simply mean the drawing of a broken line segment starting from the vertex, together with a label that tells the other endpoint of the corresponding edge.) We say that this embedding of the edges and half-edges is {\it consistent with} $\Sigma$, if the permutation determined by the embedded (half-)edges around $v$ in the positive direction is $\sigma_v$.

We will prove a stronger claim than the theorem, namely, that there exists an invariant embedding into $\R^2$ that is consistent with 
$\Sigma$.

We may assume that $\G$ has uniformly bounded degree, as we explain next. We will introduce a new graph $\G'$, that we will obtain from $\G$ by replacing vertices by paths, with two such path adjacent if and only if the original vertices were, and in a way that all degrees in $\G'$ will be at most 3. The construction will give rise to a combinatorial embedding $\Sigma'$ of $\G'$.
Then we will show that an invariant embedding of $\G'$ consistent with $\Sigma'$ can be used to define one for $\G$ that is consistent with $\Sigma$. The simple construction is summarized on Figure \ref{bounded_deg}.  So define $\G'$ as follows.
For every vertex $v\in V(\G)$ let $n(v,1),\ldots, n(v,k)$ be the listing of its neighbors given in the order by $\sigma_v$, with the starting neighbor $n(v,1)$ chosen at random, where $k=k(v)$ is the degree of $v$. The vertex set $V(\G')$ will be the union of $v(1),\ldots, v(k)$ over all the $v$. Now define edges of $\G'$ as the collection of all pairs $\{v(i),v(i+1)\}$ (as $v\in V(\G)$, $i=1,\ldots, k(v)-1$), and pairs $\{v(i), w(j)\}$ (as $v,w\in V(\G)$, $n(v,i)=w$ and $n(w,j)=v$). By first biasing $(\G,o)$ by the degree of the root and then replacing its vertices $v$ by the $v(i)$ (choosing the new root uniformly from the vertices replacing the original one) with edges as in $\G'$, we obtain a rooted graph $(\G',o')$, which is unimodular (as shown in Example 9.8 of \cite{AL} or Subsection 1.4 in \cite{BPT}). 
Finally define the unimodular combinatorial embedding $\Sigma'$ of $(\G',o')$: let $\sigma'_{v(i)}$ be the cyclic permutation $\bigl(v(i-1), w(j), v(i+1)\bigr)$ on the neighbors of $v(i)$ whenever $1<i<k(v)$, and (trivially) $\bigl(v(2),w(1)\bigr)$ for $i=1$ and $\bigl(v(k-1),w(k)\bigr)$ for $i=k$.
Suppose we find an invariant embedding consistent with $\Sigma'$ for the bounded-degree graph $\G'$; let $\phi (x)$ be the location of a vertex $x$ by this embedding, and let $L(v(i), w(j))=L(w(j),v(i))$ be the drawn edge between $v(i)$ and $w(j)$ when $v,w\in V(\G)$ and
$\{v(i),w(j)\}\in E(\G')$. (Everything that we are doing in the rest of the paragraph is covariant, hence we can fix a representative in the Palm version of the embedded graph and hence refer to actual embedded points.)
For every $v\in V(\G)$ pick an $\iota_v\in \{1,\ldots, k(v)\}$ at random. The embedded subtree induced by $v(1),\ldots, v(k)$ can be used to define broken line segments $L(v(i), v(\iota_v))$ between $\phi (v(i))$ and $\phi (v({\iota_v}))$, for all $i\not=\iota_v$, such that all these broken line segments over the various $v\in V(\G)$ are pairwise inner-disjoint, and also inner-disjoint from all the $L(v(i),w(j))$,
$\{v(i), w(j)\}\in E(\G')$, $v\not=w$.
To avoid unnecessary formalism we leave the further details to the interested reader; we only mention that an $\varepsilon$-close contour walk around the embedded subtree of edges incident to $v(1),\ldots, v(k)$ can give us guidance about where to draw the edges so that the above constraints are satisfied. Define $L(v(\iota_v), v(\iota_v))=\emptyset$.
Now, to draw an edge $\{v,w\}\in E(\G)$, consider the union of $L(v(i), v(\iota_v))$, $L(w(j), w(\iota_w))$ and $L(v(i), w(j))$, where $n(v,i)=w$ and $n(w,j)=v$. 
We end up with an embedding of $\G$ from the embedding of $\G'$ as desired; see Figure \ref{bounded_deg}. So from now on we will assume that $\G$ has bounded degrees.

\begin{figure}[h]
\vspace{0.1in}
\begin{center}
\includegraphics[keepaspectratio,scale=0.8]{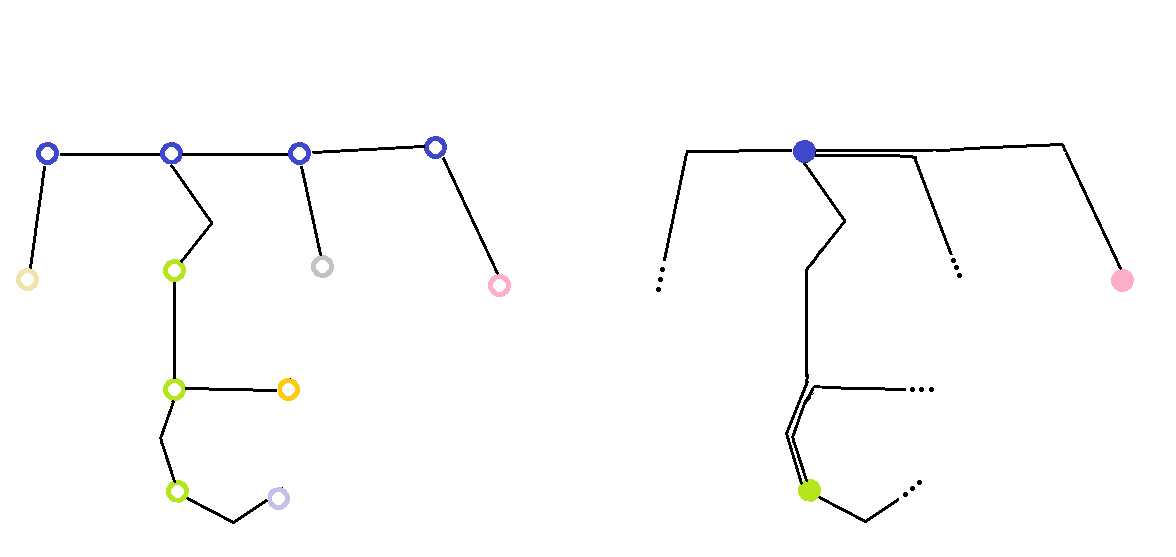}
\caption{The scheme of getting an embedding of $\G$ from the embedding of its bounded degree version $\G'$. Dots of the same color stand for the $v(1),\ldots, v(k(v))$, the solid dot indicates $v({\iota_v})$.}\label{bounded_deg}
\end{center}
\end{figure}

Now choose a one-ended unimodular spanning tree $T$ of $\G$ and apply Theorem \ref{fafaktorbol2} to get a tiling representation of it. If $\tau (v)$ is the tile representing $v\in V(T)$, then we choose a random point $\iota(v)$ of $\tau(v)$ to be the embedded image of $v$. 

For an $x\in T$ let $T_x$ be the finite subtree of $T$ induced by the union of all finite components of $T\setminus\{x\}$ and $\{x\}$. Say that the depth of $T_x$ is $k$ if the largest distance from $x$ within $T_x$ is $k$, and define 
$$L_k:=\{x\in V(T): T_x \text{ has depth } k\}.$$ 
If $x,y\in L_k$, $x\not=y$, then $T_x$ and $T_y$ are disjoint. 
Let $G^i$ be the graph on vertex set $V(G)$ and edge set
$$E(G^i):=\bigl\{\{x,y\}:\, x,y\in T_v \text{ for some } v\in L_i \bigr\}.$$
Then $G^i$ is a unimodular finite exhaustion of $G$.
Finally, define $\gamma (v)=\tau (v)$ whenever $v\in L_0$, and let
$\gamma(v)$ be the ball of radius $\dist (v,\partial\tau(v))/2$ around $v$ when $v\not\in L_0$. Let $\Gamma(v)$ be the interior of the closure of $\cup_{u\in T_v} \tau (u)$. 

The component $K_v$ of every $v\in L_i$ in $G^i$ is such that $G\setminus K_v$ is connected. Furthermore, $\Gamma(v)$ is simply connected, and $\iota (u)\in \Gamma(v)$ for every $u\in V(K_v)=V(T_v)$. 
We will embed the edges of $K_v$ in $\Gamma (v)$, and the above facts guarantee that no two points of $\iota (V(G\setminus K_v))$ are separated in $\R^2$ by these embedded edges. Furthermore, for all edges not in $K_v$ but incident to $K_v$, we will define an embedded half-edge, connecting the endpoint to $\partial\Gamma(v)$. The procedure is illustrated on Figure \ref{novesztes}.

\begin{figure}[htbp]
\begin{center}
\includegraphics[keepaspectratio,scale=0.8]{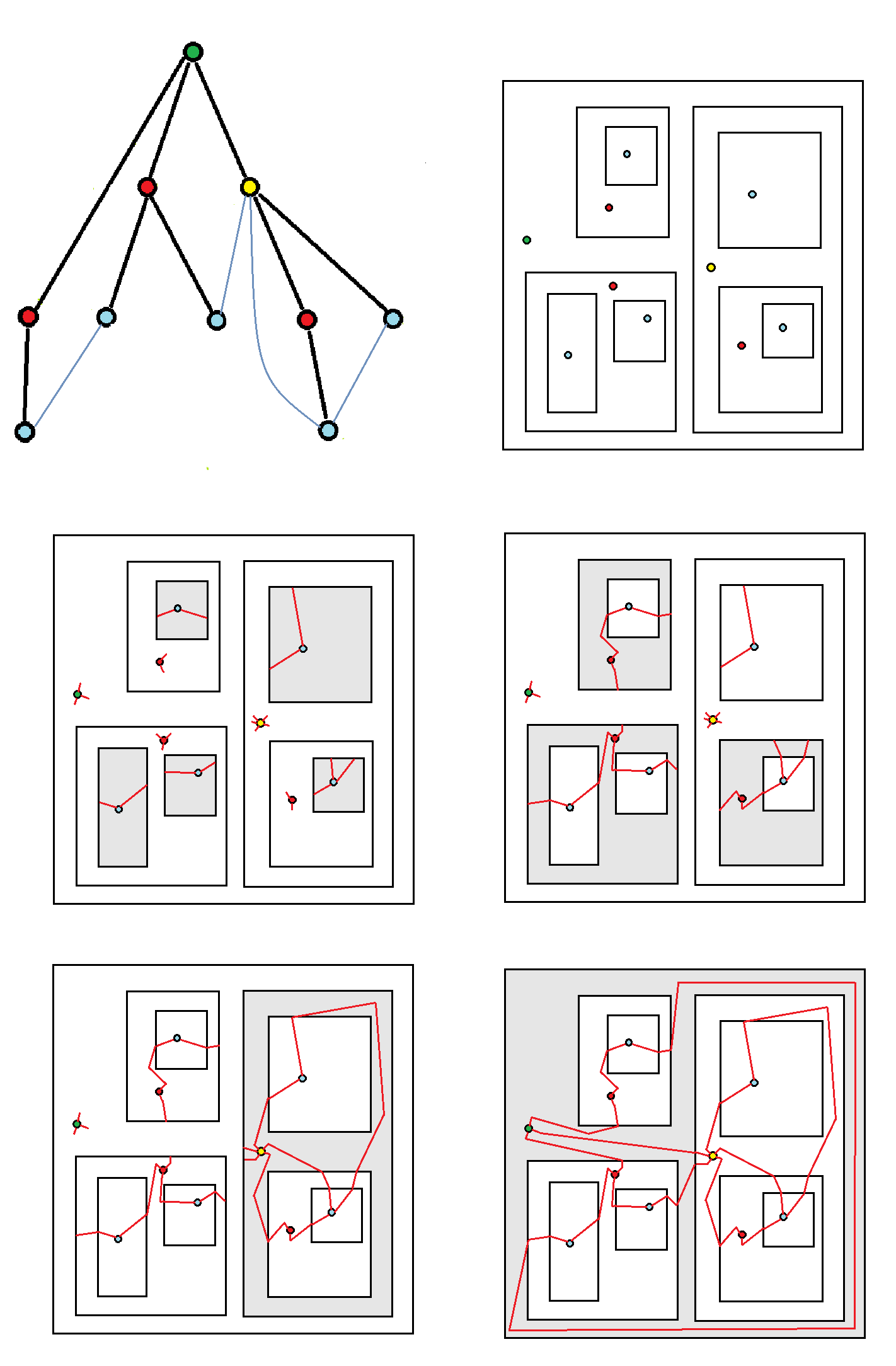}
\caption{A subtree $T_v$ of $T$ is taken (as the tree of Figure \ref{regibol}, with vertices recolored), together with the other edges of $G$ induced by it, producing the component $K_v$ of $G_n$. Blue vertices are in $L_0$, red ones in $L_1$, yellow is for $L_2$, green for $L_3$.
The embedding of edges is shown over several steps. Shaded areas indicate the $\tau(w)$, $w\in L_i$, where edges are drawn in step $i$. They are not used in any later step.}\label{novesztes}
\end{center}
\end{figure}

We will proceed in steps, embedding $G^n$ in step $n$, and also embedding the half-edges coming from $G\setminus G^n$.
This will be done in a way that 
\begin{enumerate}
\item the embedding of $G_n$ is an extension of that of $G_{n-1}$. In particular, for every $e\in G_n\setminus G_{n-1}$ the embedded image of $e$ contains its embedded half-edges from the previous step, and for every edge of $G_{n-1}$ the embedding in step $n$ is the same as in step $n-1$.
\item All edges $\{v,w\}$ with $w\in K_v$, $v\in L_n$, are embedded in the interior of the closure of $\tau(v)\cup\Gamma (w)$;
\item similarly, if an edge in $ E(K_v)\setminus G^{n-1}$ has its endpoints in the distinct subtrees $T_w$ and $T_{w'}$ of $T_v$, with $w$ and $w'$ adjacent to $v\in L_n$, then the edge is embedded into the interior of the closure of $\tau (v)\cup\Gamma (w)\cup\Gamma (w')$.
\item All half-edges starting from a vertex in $K_v$, $v\in L_n$, reach $\partial\Gamma(v)$.
\end{enumerate}
As a preparatory step (step 0), consider $G^0$, the empty graph on $V(G)$. For every $v\in V(G)$, pick some embedding of the half-edges starting from $\iota(v)\in\gamma(v)$ to randomly chosen points of the boundary of $\gamma(v)$, in a way that the embedding of the half-edges is consistent with $\Sigma$.
The embedding defined for step 0 trivially satisfies (1)-(4). It is consistent with $\Sigma$ by definition, and any extension will also be consistent with $\Sigma$.
We proceed to step $n$ recursively. For every component $K_v$ of $G^n$ ($v\in L_n$)
do the following. Let $K_{v_1},\ldots, K_{v_k}$ be the components of $G^{n-1}$ induced by $V(K_v)$ ($v_i\in\cup_{j=0}^{n-1}L_j$). 
Then $\Gamma(v)$ contains the embedded points of $V(K)$, moreover, the
$\Gamma(v_i)$ are pairwise disjoint open subsets of $\Gamma(v)$ containing the embedded $K_{v_i}$ respectively.
Contract every $K_{v_i}$ in $\G$ to a single point, and also contract $\G\setminus K_v$ (which is a connected graph) to a single point $x_\infty$. The resulting finite graph $K'$ inherits a combinatorial embedding from $\Sigma$. Consider an arbitrary embedding of $K'$ to the 2-sphere that represents this combinatorial embedding. Remove an infinitesimally small neigborhood of the embedded nodes, to get a 2-sphere with holes in it and pairwise disjoint arcs connecting some of these holes. It is easy to check that there is a homeomorhism from this surface to $\tau(v)$, which maps the boundary of the hole belonging to $x_\infty$ to $\partial \Gamma (v)\subset\partial\tau (v)$, and maps the boundary of the hole corresponding to the contracted $K_{v_i}$ to $\partial \Gamma(v_i)\subset\partial \tau(v)$. Furthermore, this homeomorphism can be chosen so that the images of the drawn segments are continuations of the respective drawn half-edges in the $\Gamma (v_i)$. We have just shown that there exists an embedding of the component $K_v$ of $G_n$ into $\Gamma(v)$ that satisfies (1)-(4); now choose the broken line segments for one such embedding randomly in a way as described at the beginning of this proof.

Since every step is an extension of the previous one, we obtain an embedding of $G$ in the limit, as desired. Local finiteness is guaranteed by the fact that every $\tau (u)$ ($u\in V(G)$) is intersected by finitely many embedded edges.
\end{proof}


\begin{theorem}\label{parab_tiling}
Every amenable unimodular random planar graph $(\G,o)$ can be represented in $\R^2$ as the neighborhood graph of an invariant random tiling.
\end{theorem}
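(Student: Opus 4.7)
The idea is to start from the isometry-invariant embedding furnished by Theorem~\ref{parab_embed} and thicken it into a tiling, then read off the finite-expected-area conclusion via Palm calculus. First I apply Theorem~\ref{parab_embed} to get an isometry-invariant embedding $\iota:V(G)\cup E(G)\to\R^2$ of $G$ with no accumulation points and edges drawn as broken line segments. Since $G$ is one-ended and $\iota$ has no accumulation points, every face of $\iota(G)$ is a bounded region enclosed by a finite polygonal cycle; call the set of faces $\mathcal{F}$.

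Next I manufacture the tiling $\{T_v:v\in V(G)\}$ as the closure of four types of pieces, all chosen measurably and $\mathrm{Isom}(\R^2)$-equivariantly from the embedded graph. (i) Around each embedded vertex $\iota(v)$ place a small open disk $D_v$ of radius $r_v$. (ii) Around each embedded edge $\iota(e)$ with $e=\{v,w\}$ place a thin polygonal strip $S_e$, cut at its midline into halves $S_{e,v}$ and $S_{e,w}$ attached to the respective endpoints. (iii) Inside each face $F\in\mathcal F$ select an interior point $p_F$ and draw pairwise disjoint polygonal arcs from $p_F$ to each strip-midline endpoint on $\partial F$; these arcs partition $F$ into polygonal wedges, one per occurrence of a vertex at a corner of $\partial F$. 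Set $T_v$ to be the closure of the union of $D_v$, of all $S_{e,v}$ with $e\ni v$, and of all wedges attached to corners at $v$. Because $\iota$ has no accumulation points, the radii, strip widths, face points and wedge arcs can be taken small enough to be non-degenerate while remaining local and equivariant.

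Finally I check the three required properties. Invariance of the tiling follows from the equivariance of every step of the construction. For the neighborhood structure: two tiles $T_v$ and $T_w$ share one-dimensional boundary precisely when either $\{v,w\}\in E(G)$ (they meet along the midline of $S_{\{v,w\}}$) or $v$ and $w$ sit at consecutive corners of some face $F$ (they meet along the arc from $p_F$ to the strip midline of the edge between them); the second case forces the first, so the neighborhood graph is exactly $G$. For finite expected area: since the invariant embedding $\iota$ has positive finite intensity $\lambda$, a direct mass transport argument---send the Lebesgue measure on each tile uniformly to its vertex, and apply the MTP or, equivalently, Neveu's exchange formula---gives $\E[\mathrm{area}(T_o)]=1/\lambda<\infty$. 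The main obstacle I expect is not conceptual but organizational: simultaneously making the disk, strip, face-point and wedge-arc data measurable, equivariant, and mutually compatible across shared boundaries, while keeping each datum locally determined so that invariance passes through cleanly; the no-accumulation-point property of $\iota$ is what makes such a local, simultaneous choice possible.
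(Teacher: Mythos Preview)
Your overall strategy---start from the invariant embedding of Theorem~\ref{parab_embed} and perform a barycentric-type subdivision of each face---is the paper's strategy as well, and your Palm/intensity argument for $\E[\mathrm{area}(T_o)]=1/\lambda$ is correct. The disk and strip layers are harmless extra decoration; the wedges alone already do the job.

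There is, however, a genuine gap. You assert that ``since $G$ is one-ended and $\iota$ has no accumulation points, every face of $\iota(G)$ is a bounded region enclosed by a finite polygonal cycle.'' This is false. One-endedness of $G$ does not force the combinatorial (hence geometric) faces to be finite: for instance, the canopy tree is a one-ended amenable unimodular planar graph whose unique face is the entire complement, which is unbounded with a bi-infinite boundary walk. More generally, any amenable unimodular planar graph that is not $2$-connected, or that simply has a face with infinitely many boundary edges, will produce unbounded faces under $\iota$. Your wedge construction, which requires selecting an interior point $p_F$ and drawing finitely many arcs to the boundary, breaks down completely for such $F$; there is no way to partition an unbounded face with a bi-infinite boundary into finitely many bounded wedges meeting at a single point. (Note also that the theorem as stated does not even assume one-endedness, though this is a lesser issue since Theorem~\ref{parab_embed} already needs it.)

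The paper confronts this case head-on. For an unbounded face $F$ it takes a conformal map $f$ from $F$ to the upper half-plane $\H$ sending $\infty$ to $\infty$, marks on $\partial\H\cong\R$ the images of the midpoints of the boundary edges of $F$, and pulls back the family of vertical half-lines through those marks. The preimages of these vertical lines subdivide $F$ into pieces, one per boundary vertex occurrence, with the correct adjacency pattern; conformal invariance (the map is unique up to $x\mapsto ax+b$) guarantees the construction is well-defined and equivariant. To repair your argument you would need either to supply an analogous mechanism for unbounded faces, or to first pass to the triangulated supergraph $G^+$ of Theorem~\ref{triangulate} (where all faces are triangles and hence bounded) and then argue that the tiling can be pushed back to $G$.
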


In \cite{Ti2} it is proved that every amenable unimodular transitive graph can be represented by an invariant tiling of $\R^d$ for $d\geq 3$, and the proof extends from transitive to random right away.

\begin{proof}
Consider the embedding of $\G$ into $\R^2$ as in Theorem \ref{parab_embed} and let $\Pp$ be the point process that the embedded vertices define. To each face $F$ and point $v\in \partial F$, $v\in \Pp$, we will assign a piece of the face incident to $v$, in such a way that two such pieces share a 1-dimensional boundary iff the corresponding vertices are adjacent. For the case of bounded faces one can apply a modified ``barycentric subdivision", see Figure \ref{baricentric}: for each pair $v$ and $w$ of adjacent vertices that are consecutive along $F$, consider the broken line segment representing the edge between them, and consider its midpoint, that is, the point that halves the length of the broken line. Choose some point uniformly in $F$, and connect it to all these midpoints by some broken line.
If $F$ is infinite, we will apply a trick similar to the one in \cite{K}. For every pair $v$ and $w$ of adjacent vertices such that $\iota (v)$ and $\iota (w)$ are consecutive along $F$, let $h(v,w)=h(w,v)$ be the midpoint of the broken line segment between them. Choose a conformal map $f$ between $F$ and the upper half plane $\H$ of $\C$ that maps infinity to infinity. By the standard extension of $f^{-1}$ to the boundary $\partial \H$,  we can define a set of $f$-images in $\R$ for every $h(v,w)\in\partial F$. (This set consists of one or two points, depending on whether the broken line between $v$ and $w$ has $F$ on only one side or on both.) Let $a\in R$ be one such image, and consider the vertical line $L_a=\{a+bi\,:\,b\in\R^+\}$ in $\H$. Consider $f^{-1}(L_a)$ for all the $a$. One can check that they subdivide $F$ into pieces as we wanted.
It is also clear that the construction does not depend on the choice of $f$ (which is unique up to conformal automorphisms of the upper half plane of the form $x\mapsto ax+b$, $a,b\in\R$, $a\not=0$), and that it is invariant. See \cite{K} for a detailed argument.
\begin{figure}[h]
\vspace{0.1in}
\begin{center}
\includegraphics[keepaspectratio,scale=1.4]{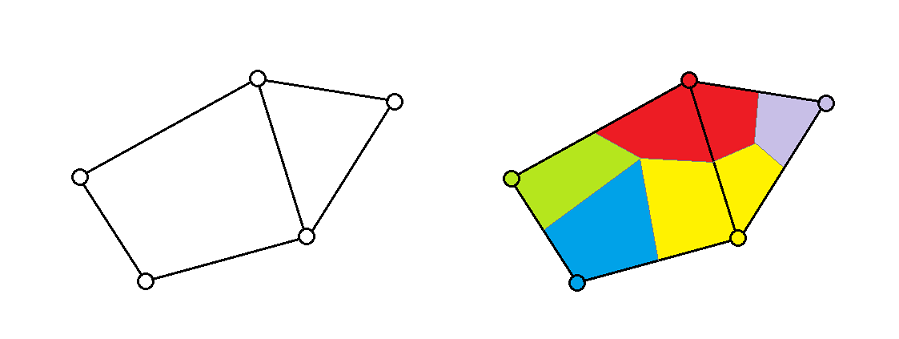}
\caption{Splitting up a face to subtiles. Broken line segments are represented by straight segments for simplicity.}\label{baricentric}
\end{center}
\end{figure}

It remains to prove that the tile of the origin has finite area almost surely. This is known for any invariant point process in $\R^2$ (whose intensity is automatically positive) and partition as in our setup, see, e.g., (9.15) in \cite{CSKM}.
\end{proof}

We expect that with some extra work one could also get a tiling where every tile has area 1. One would have to ensure that the embedded vertices in Theorem \ref{parab_embed} form a point process where the number of points in large boxes is relatively close to the expectation, and then build up the tiling stepwise and directly, eventually assigning unit tiles to all vertices. We have not worked out the details.

What seems to be harder to control, is the diameter of the tiles.

\begin{quest}\label{diameter_question}
What can we say about the distribution of the diameter of a tile in a construction as Theorem \ref{parab_tiling}? How fast can it decay?
\end{quest}
Various invariance principles follow from 
\cite{GMS} 
if one is able to construct an initial embedding for the given graph that satisfies a certain finite energy condition. The embeddings are assumed to be translation invariant modulo scaling. Whether our method can be useful in this setting is to be investigated in the future.

\section{Proofs of the main theorems}\label{proofsofmain}

\begin{proof}[Proof of Theorem \ref{uj}]
The existence of such representations if $\G$ is amenable is
proved in Theorems \ref{parab_embed} and \ref{parab_tiling}.

For the ``only if" part, suppose first that a nonamenable $\G$ had an isometry-invariant embedding as in Theorem \ref{dichotomy0} into $\R^2$. Then one could use the invariant random partitions of $\R^2$ to $2^n$ times $2^n$ squares to define a unimodular finite exhaustion of $\G$. Thus $\G$ has to be amenable, a contradiction.
\end{proof}

\begin{proof}[Proof of Theorem \ref{dichotomy0}]
For the nonamenable case the unimodular embedding into $\H^2$ is given in Theorem \ref{hyper_embed}. That there is no such an embedding into $\R^2$ is proved the same way as in the proof of Theorem \ref{uj}.

If $\G$ is amenable, an isometry-invariant embedding into $\R^2$ exists. With the same arguments as in Example 9.5 of Aldous and Lyons in \cite{AL}, 
the Palm version of this random embedded graph (as a graph with the decoration given by the embedding and rooted in the origin) is unimodular.
\end{proof}

\begin{proof}[Proof of Theorem \ref{dichotomy}]
The ``if" parts of the claims follow from Theorem \ref{parab_tiling} and \ref{hyper_embed}. (Although Theorem \ref{hyper_embed} is for simple graphs, the tiling obtained from a circle-packing can be extended when there are parallel edges or loops.)

For the ``only if" part, note that an invariant tiling gives rise to an invariant embedding (choose a uniform random point in each tile and suitably connect it to its neighbors). Hence the claim is reduced to that in Theorem \ref{dichotomy0}.
\end{proof}

\section{Further directions and open problems}\label{concluding}
To conclude, we propose a number of questions that are more or less connected to our main topic.

A natural direction could be the following. Say that two unimodular Lie groups $M$ and $N$ are equivalent, if every, possibly random,
tiling $T$ that invariantly tiles $M$ with compact tiles also invariantly tiles $N$.
Here tiles are compact simply connected.
Assume there is a tiling that invariantly tiles both $M$ and $N$. 
\begin{itemize}
\item Must they be quasi-isometric? We found that this is not necessarily the case. As proved in \cite{Ti2}, $R^d$ can be invariantly tiled with bounded tiles by any amenable transitive one-ended graph, whenever $d\geq 3$. In particular, both $\R^3$ and $\R^4$ can be invariantly tiled by $\Z^3$, yet they are not quasi-isomorphic.
\item Must they be equivalent? Not necessarily. $\Z^2$ invariantly tiles $\R^2$, and also $\R^3$ (by the just mentioned result). But the two are not equivalent, because no nonplanar graph (such as $\Z^3$) can tile $\R^2$.
\end{itemize}


Bonk and Schramm \cite{BonkS} constructed a quasi-isometric embedding of hyperbolic graphs into real hyperbolic spaces.

\begin{quest}
Is there an invariant quasi-isometric embedding of unimodular hyperbolic graphs into a real hyperbolic space $\H^d$? Or is there such a unimodular embedding?
\end{quest}

Consider some infinite graph, and partition it to infinitely many (roughly) connected infinite subgraphs, such that each part neighbors only finitely many other parts.
Which Cayley graphs admit {\it invariant random partitions} (IRP)?
(Variants of this question can further require that the parts are indistinguishable or removing the ``finite number of neighbors" requirement.)
Together with Damien Gaboriau and Romain Tessera we observe that
a Cayley graph of a group with positive  first $L^2$ Betti number does not admit an IRP.
As an exercise, show that the regular trees do not admit IRP.

With Romain Tessera we conjecture that the lamplighter over $\Z$ does not admit an IRP.
What about $SL_3(\Z)$?

Given an invariant random partition, when is it possible to further partition each part to infinitely many (roughly) connected infinite subgraphs?
Think of the examples $\Z^2$, $\Z^3$ and $ T \times \Z$.
The number of possible iterations might be of interest.

\medskip

\noindent
{\bf Acknowledgments:} We are indebted to an anonymous referee for many valuable improvements on the paper, as well as for finding an error and suggesting some ideas for the correction.
Thanks to G\'abor Pete, Omer Angel, Sebastien Martineau, Romain Tessera and Ron Peled for useful discussions, and to Andr\'as Stipsicz for a reference. The second author was supported by the ERC Consolidator Grant 772466 ``NOISE'' and by Icelandic Research Fund Grant 185233-051.

\ \\
\ \\
\ \\
\noindent
{Itai Benjamini}\\
Weizmann Institute of Science\\
\texttt{itai.benjamini[at]weizmann.ac.il}\\
\ \\
{\'Ad\'am Tim\'ar}\\
University of Iceland\\
and\\
Alfr\'ed R\'enyi Institute of Mathematics\\
\texttt{madaramit[at]gmail.com}

\end{document}